\theoremstyle{plain}
\newtheorem{theorem}{Theorem}[section]
\newtheorem{lemma}[theorem]{Lemma}
\newtheorem{corollary}[theorem]{Corollary}
\theoremstyle{definition}
\theoremstyle{remark}
\newtheorem{remark}{Remark}
\newcommand{\abs}[1]{\left|#1\right|}
\begin{document}

\title[Periodic solutions for a reduction of Benney chain]
      {On Periodic solutions for \\a reduction of Benney chain}

\date{20 March 2008}
\author{Misha Bialy}
\address{Raymond and Beverly Sackler School of Mathematical Sciences, Tel Aviv University,
Israel} \email{bialy@post.tau.ac.il}
\thanks{This paper was started on Island 3 meeting on July 2007.
It is my pleasure to thank the organizers for the support}

\subjclass[2000]{35L65,35L67,70H06 } \keywords{Benney chain, Riemann
invariants, Genuine nonlinearity, Gibbons-Tsarev compatibility
conditions}

\begin{abstract}

  We study periodic solutions for a quasi-linear system, which is the
  so called  dispersionless Lax reduction of the
  Benney moments chain. This question naturally arises in search of
  integrable Hamiltonian systems of the form $ H=p^2/2+u(q,t) $ Our
  main result classifies completely periodic solutions for 3 by 3
  system. We prove that the only periodic solutions have the form of traveling
  waves, so
  in particular, the potential $u$ is a function
  of a linear combination of $t$ and $q$. This result implies
  that the there are no nontrivial cases of existence of the fourth
  power integral of motion for $H$: if it exists, then it is equal necessarily
  to the square of the quadratic one. Our method uses two new
  general
  observations.  The first is the genuine non-linearity of the maximal and minimal
  eigenvalues for the system. The second observation uses the compatibility
  conditions of Gibonns-Tsarev in order to give certain exactness for the system in Riemann
  invariants. This exactness opens a possibility to apply
  the Lax analysis of blow up of smooth solutions, which
  usually does not work for systems of higher order.
\end{abstract}

\maketitle

\section{Introduction}
\label{sec:intro}

Let $ H=p^2/2+u(q,t) $ be a Hamiltonian of a $1,5$-degrees of
freedom system with the potential which is assume throughout this
paper  to be periodic function in both variables. There is a
conjecture attributed to G.Birkhoff saying that the only
integrable plane convex billiards are ellipses. The direct analog
of this conjecture for the Hamiltonian system with 1,5 degrees of
freedom would be the claim that the only integrable Hamiltonian
functions of the form $ H=p^2/2+u(q,t)$ are those having the
potential functions $u$ which are periodic  functions of the form
: $u=u( mq+nt)$. There are several attempts to approach this
problem. Let me refer to the works
\cite{delshams},\cite{bialy1},\cite{bp},\cite{perelomov},\cite{akn}
for various approaches to this circle of questions which remain
outside the discussion of this paper .

In the present note we restrict this question to the search of the
additional integrals which are polynomial with respect to the
momenta variable $p$ with the coefficients which are periodic
functions of $q$ and $t$ . More precisely we want to find all those
potential functions $u(q,t)$ for which there exists an additional
function $F(p,q,t)$ invariant under the Hamiltonian flow (such an
$F$ is called the first integral of motion) which is a polynomial in
the variable $p$ of a given degree, say $n+1$, having all the
coefficients periodic in $q$ and $t$. Write
$$ F(p,q,t)=u_{-1}p^{n+1}+u_0p^n+u_1p^{n-1}+\cdots +u_n,$$
and substitute to the equation of conservation of $F$.
\begin{equation}
 \label{eq:conservation}
 F_t+pF_q-u_qF_p=0
\end{equation}
Equating to zero the coefficients of various powers of $p$, one
easily obtains the following information. The coefficient $u_{-1}$
must be a constant, which will be normalized to be ${1\over n+1}$.
Also $u_0$ must be a constant, which we shall assume to be zero
(this can be achieved by a linear change of coordinates on the
configuration space $\mathbb{T}^2$). Moreover the coefficient $u_1$
satisfies $(u_1)_q=(u)_q$. Therefore, $u_1$ and $u$ will be assumed
to be equal (the addition of any function of $t$ to the potential
$u$ does not change the Hamiltonian equations). Moreover, the column
of the of the rest of the coefficients $U=(u_1 ,.., u_n)^t$ satisfy
the following quasi-linear system of equations.

\begin{equation}
  \label{system}
  U_t+A(U)U_q=0 , \quad
  A(U)=-
  \begin{pmatrix}
  0 & -1 & 0 &  \cdots & 0 & 0 \\[1mm]
  {(n-1)}u_1 & 0 & -1 & 0 & \cdots & 0 \\[1mm]
  \vdots & \vdots & \vdots & \vdots & \vdots & \vdots \\
  2u_{n-2} & 0 & 0 & \cdots & 0 & -1\\
  u_{n-1} & 0 & 0 & \cdots & 0 & 0
  \end{pmatrix}
\end{equation}
Notice that the derivative $F_p$ of $F$  coincides with the
characteristic polynomial of $A(U)$.

In fact the system (\ref {system}) is very well known among
integrable systems community. This is the so called dispersionless
Lax reduction of the moments Benney chain ( see for example
\cite{gk}, \cite{gt}, \cite {pavlov}, \cite {tsarev} and
references there in). There are many beautiful properties of this
reduction. For example, this is a Hamiltonian system of
Hydrodynamics type (in the sense of Dubrovin and Novikov see
\cite{dkn}. Moreover, it has infinitely many additional
conservation laws \footnote{I was told by M.Pavlov that this is a
well known fact among the specialists in integrable systems, see
also \cite{bialy2} where it was rediscovered}. The most important
property for this paper is that the system (\ref{system}) is
diogonalizable, i.e. can be written in the form Riemann invariants
(see (\ref {riemann}) below). Almost nothing is known, however,
about the global existence of smooth solutions for this system. In
the theory of quasi-linear hyperbolic PDEs it is a well known
problem to prove the occurrence of blow up of smooth solutions. It
was performed first by Lax \cite {lax} for 2 by 2 systems
satisfying the so called genuine nonlinearity condition. His
method relies heavily on the possibility to write the 2 by 2
system in the diagonal form, and also on the genuine non-linearity
condition. Lax analysis was performed in \cite{bialy3} for 2 by 2
system of the form (\ref{system}) where it was proved that the
only periodic solutions for that case are constants. For systems
of higher size the original method by Lax does not apply in
general. We will show bellow two new observations concerning our
quasi-linear system. The first is, that for the hyperbolic case,
i.e. the case when all eigenvalues of the matrix $A(U)$ are
distinct and real, it follows that the minimal and maximal
eigenvalues of $A(U)$ are in fact genuinely non-linear in the
sense of Lax (see Corollary \ref{genuine} bellow) . The second key
fact is that the so called Gibbons-Tsarev compatibility system
(the equation (\ref{derivative-4}) of Lemma \ref{lemma1} and
Corollary \ref{exactness}) provides certain "exactness" of the
system (see the equation (\ref{differentiating 3}) and therefore
enables one to perform the Lax original analysis for higher values
of $n$.

 Our main application of this approach in this paper is
the following classification of smooth periodic solutions for the
system.
\begin{theorem}
 \label{main theorem} Let $n=3$. Then the only periodic solution of the
 quasi-linear system (\ref {system}) are the traveling waves
 solutions, where $u_1,u_2,u_3$ do not depend on t.
\end{theorem}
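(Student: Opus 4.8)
The plan is to pass to Riemann invariants and then run a Lax-type blow-up analysis on the two \emph{extremal} characteristic families; the point is that the Gibbons--Tsarev compatibility conditions are exactly what makes this analysis go through once $n>2$. \textbf{Reduction.} On the open set $\mathcal{H}$ where $A(U)$ has three distinct real eigenvalues $\lambda_1<\lambda_2<\lambda_3$, introduce Riemann invariants $r_1,r_2,r_3$, so that (\ref{system}) takes the diagonal form (\ref{riemann}), namely $\partial_t r_i+\lambda_i(r)\,\partial_q r_i=0$ for $i=1,2,3$. Let $U$ be a solution periodic in $q$ and in $t$; then $U$ is smooth, bounded and globally defined, and $r=(r_1,r_2,r_3)$ has compact image $K$. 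Assume first that $K\subset\mathcal{H}$, postponing the degenerate case.

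\textbf{The extremal invariants are frozen.} Fix $k\in\{1,3\}$. By Corollary \ref{genuine} the $k$-th family is genuinely non-linear, so $\partial_{r_k}\lambda_k$ is continuous and nowhere zero on $\mathcal{H}$, hence of constant sign and bounded away from $0$ on $K$. Set $w=\partial_q r_k$; differentiating the $k$-th equation of (\ref{riemann}) in $q$ and evaluating along the characteristic $\dot q=\lambda_k(r)$ yields
\[
  \dot w \;=\; -\,(\partial_{r_k}\lambda_k)\,w^2 \;-\;\Big(\,\textstyle\sum_{j\neq k}\partial_{r_j}\lambda_k\,\partial_q r_j\,\Big)\,w.
\]
For $n>2$ the bracketed term is the obstruction to Lax's argument; here the Gibbons--Tsarev relations — equation (\ref{derivative-4}) of Lemma \ref{lemma1} and Corollary \ref{exactness} — express precisely this transported combination as an exact $t$-derivative of a bounded function of $r$, which is the content of (\ref{differentiating 3}). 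Absorbing it into an integrating factor $h(r)$, bounded and bounded away from $0$ on $K$, and putting $W=h(r)\,w$, the relation becomes a scalar Riccati equation $\dot W=-g(r)\,W^2$ with $g$ of one sign and $|g|\ge g_0>0$ on $K$. Since $W$ is bounded and defined for all $t\in\mathbb{R}$, the elementary blow-up of $\dot W=-gW^2$ in finite time forces $W\equiv 0$ on every $k$-characteristic. Hence $\partial_q r_1\equiv 0\equiv\partial_q r_3$, and the transport equations then give $\partial_t r_1\equiv 0\equiv\partial_t r_3$: thus $r_1\equiv c_1$ and $r_3\equiv c_3$ are constants.

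\textbf{The middle family carries a traveling wave.} With $r_1\equiv c_1$ and $r_3\equiv c_3$ fixed, the remaining invariant obeys the scalar conservation law $\partial_t r_2+\mu(r_2)\,\partial_q r_2=0$ with $\mu(\cdot)=\lambda_2(c_1,\cdot,c_3)$. A smooth solution periodic in $q$ and defined for all $t$ can exist only if $\mu$ is constant on the range of $r_2$ — otherwise two characteristics cross and the gradient blows up — or if $r_2$ is itself constant. In the first case $\mu\equiv c$ and $\partial_t r_2+c\,\partial_q r_2=0$, so $r_2=r_2(q-ct)$; in the second $U$ is constant. Either way $U$ is a function of a single linear combination of $q$ and $t$, i.e. a traveling wave, which in a co-moving Galilean frame is independent of $t$, as in the statement; in $U$-space this orbit is a closed integral curve of the (linearly degenerate) middle eigenvector field of $A(U)$.

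\textbf{The degenerate locus, and the main obstacle.} It remains to handle points of $K$ at which two eigenvalues of $A(U)$ collide or a pair becomes complex — that is, where the discriminant of the characteristic polynomial $F_p$ of $A(U)$ vanishes — since there the genuine-nonlinearity estimate of the previous step is lost. One argues that a non-constant smooth periodic solution cannot reach such points: either by analysing directly how the diagonal flow can meet the discriminant locus, or by restricting to the invariant sublocus on which $A(U)$ is effectively $2\times 2$ and invoking the $n=2$ result of \cite{bialy3} (only constants). The two places where genuine work is needed, and which I expect to be the crux, are: (i) checking that the Gibbons--Tsarev relations really do recast the transported coupling term as an exact derivative with a bounded, sign-definite integrating factor, so that the Riccati comparison is rigorous; and (ii) controlling this non-hyperbolic boundary, where the whole genuine-nonlinearity/Lax mechanism is unavailable.
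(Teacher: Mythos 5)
Your first step --- genuine nonlinearity of the extremal eigenvalues plus the Gibbons--Tsarev exactness turning the coupling term into $L_{v_i}G_i$, hence a Riccati equation for $z_i=e^{G_i}(r_i)_q$ --- is exactly the paper's mechanism, and under your standing assumption that the range of $r$ is a compact subset of the strictly hyperbolic region it is correct. But the two places you yourself flag as ``where genuine work is needed'' are precisely where the theorem is actually proved, and your proposed strategy for the second one would fail. The paper does \emph{not} show that a non-constant periodic solution avoids the discriminant locus; it cannot, because the solutions that the theorem allows, $F=(p^2/2+u_1(q))^2+const$ with $u_1$ non-constant, have an open \emph{elliptic} region (two complex conjugate roots of $F_p$) wherever $u_1>0$, and they meet the degenerate locus $\Omega_{00}$ on the boundaries of the strips. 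So ``the solution cannot reach such points'' is false, and reducing to the $2\times 2$ result of \cite{bialy3} does not apply on an open elliptic set. What the paper does instead is decompose $\mathbb{T}^2$ into $\Omega_h$, $\Omega_e$, $\Omega_0$ and treat each: on $\Omega_h$ it refines the Riccati argument to characteristics that may approach $\partial\Omega_h$ (showing $z_1\to 0$ there since $F_{pp}\to 0$, then deriving a quantitative lower bound on $K_1$ along a backward characteristic to force blow-up unless $z_1\equiv 0$); on $\Omega_e$ it uses a completely different tool, the strong maximum principle for the elliptic system satisfied by $\re r_{1,2}$ and $\im r_{1,2}$, with $\im r_{1,2}=0$ on $\partial\Omega_e$; on $\Omega_0$ a separate argument gives $\Omega_0=\Omega_{00}$. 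None of this is in your proposal, and it is the bulk of the proof.

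There is a second, smaller but genuine, gap in your middle-family step: you conclude $r_2=r_2(q-ct)$ and appeal to a Galilean frame, but the theorem asserts $u_1,u_2,u_3$ are independent of $t$ in the given frame. The paper closes this by using the structure of the system itself: since $\lambda_2=\mu$ is a root of $F_p$, one has $u_2=-2\mu u_1-\mu^3$, and the first equation of (\ref{system}) gives $(u_1)_t=2\mu(u_1)_q$, while constancy of all $r_i$ along the $\lambda_2$-characteristics gives $(u_1)_t+\mu(u_1)_q=0$; together these force either $\mu=0$ or $U$ constant. Without this step you have not proved the statement as written.
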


\begin{corollary}
 \label{corollary}
 Let $F=1/4p^4+u_1p^2+u_2p+u_3$ be a  polynomial of degree four
with periodic coefficients which satisfies the equation
(\ref{eq:conservation}). Then there necessarily exists a quadratic
integral, i.e. the energy $H$, and $F$ is a function of $H$.
\end{corollary}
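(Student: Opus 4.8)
First I would deduce the corollary directly from Theorem~\ref{main theorem}. Starting from $F=\frac14p^4+u_1p^2+u_2p+u_3$ with periodic coefficients satisfying (\ref{eq:conservation}), the computation preceding (\ref{system}) already identifies $u_1$ with the potential $u$ and shows that the triple $(u_1,u_2,u_3)$ is a periodic solution of the quasi-linear system (\ref{system}) with $n=3$. Thus Theorem~\ref{main theorem} applies and gives that $u_1,u_2,u_3$ depend on $q$ only. In particular the potential $u=u_1(q)$ is autonomous, so $H=p^2/2+u(q)$ is conserved along its own flow; this is the quadratic integral whose existence is asserted.

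It then remains to show that $F$ is a function of $H$, and for this I would feed the information $F_t=0$ back into (\ref{eq:conservation}), which reduces to the polynomial identity $p\,F_q=u_qF_p$ in the variable $p$, with $F_q=(u_1)_qp^2+(u_2)_qp+(u_3)_q$, $F_p=p^3+2u_1p+u_2$ and $u_q=(u_1)_q$. The $p^3$-terms cancel identically, and the coefficients of $p^2$, $p^1$ and $p^0$ give respectively
\[
 (u_2)_q=0,\qquad (u_3)_q=2u_1(u_1)_q=(u_1^2)_q,\qquad u_2\,(u_1)_q=0 ,
\]
which are just the stationary form of the three equations of (\ref{system}). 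Hence $u_2$ is a constant and $u_3=u_1^2+c$ for some constant $c$.

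To conclude, suppose first that the potential is non-constant, so that $(u_1)_q$ is not identically zero; then the last relation forces the constant $u_2$ to vanish, and therefore
\[
 F=\tfrac14p^4+u_1p^2+u_1^2+c=\Bigl(\tfrac{p^2}{2}+u_1\Bigr)^2+c=H^2+c ,
\]
so $F=H^2+c$ is a function of $H$---the square of the quadratic integral up to an additive constant---as claimed. The only alternative is $u_1\equiv\mathrm{const}$, which is the degenerate free-motion case $H=p^2/2+\mathrm{const}$: there $p$ itself is conserved and the same computation merely yields $F=H^2+u_2\,p+c$, a polynomial in $H$ and $p$, and this trivial possibility is set aside.

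I expect the corollary to be essentially immediate once Theorem~\ref{main theorem} is in hand; the only points requiring a little care are verifying that the hypotheses of the theorem are met (that the coefficients of $F$ genuinely solve (\ref{system}) and are periodic in $q$ and $t$) and the harmless separate treatment of free motion. All the real difficulty sits in Theorem~\ref{main theorem} itself, whose proof rests on the genuine non-linearity of the extreme eigenvalues of $A(U)$ (Corollary~\ref{genuine}) and on the Gibbons--Tsarev ``exactness'' coming from Lemma~\ref{lemma1} and Corollary~\ref{exactness}.
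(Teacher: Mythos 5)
Your proof is correct, and it follows the route the paper intends: the corollary is stated without a separate proof and is meant to be read off from Theorem \ref{main theorem} together with the explicit formulas obtained in the regional analysis. The one genuine difference is how you extract the final form of $F$. The paper already has $u_2\equiv 0$ and $u_3=u_1^2+\mathrm{const}$ built into the statements of Theorem \ref{Omegah} and Theorem \ref{omegae} (plus the degenerate analysis of Section \ref{degenerate}), so its implicit proof of the corollary consists of patching $F=\left(p^2/2+u_1(q)\right)^2+\mathrm{const}$ across the regions $\Omega_h$, $\Omega_e$, $\Omega_{00}$. You instead use only the $t$-independence supplied by Theorem \ref{main theorem} and rederive those relations in one stroke from the stationary form $pF_q=u_qF_p$ of (\ref{eq:conservation}); this is a clean, self-contained way to finish that avoids re-invoking the region-by-region formulas and the matching of constants across components. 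Your separate treatment of the constant-potential case is also a point in your favour: when $u_1$ is constant and $u_2\neq 0$ the conserved polynomial $F=H^2+u_2p+\mathrm{const}$ is a function of $p$ but not of $H$ alone, so the corollary as literally stated needs exactly the caveat you make explicit (the paper's abstract signals this by speaking only of ``nontrivial'' cases).
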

In order to prove this theorem we shall divide between different
regions: strictly Hyperbolic region $\Omega_h$ where all three
eigenvalues of $A(U)$ are real and distinct, Elliptic region
$\Omega_e$, where two eigenvalues are complex conjugate and the
third one is real, and finally the region of degeneracy
$\Omega_0$, where at least two of the eigenvalues collide. It
turns out that in all these regions the behavior of solutions can
be understood completely. The proof of Theorem \ref{main theorem}
is obtained just by patching together the information of these
three cases.
 Organization of
the paper is as follows. In the next Section we shall explain the
two basic observations mentioned above concerning hyperbolic
situation. In Sections \ref{hyperbolic}, \ref{elliptic},
\ref{degenerate} we study the regions $\Omega_h,\Omega_e,\Omega_0$
respectively for the case of $n=3$. In Section \ref{lemma} we prove
formulas for derivatives of the eigenvalues and verify, for the sake
of completeness, the Gibbons-Tsarev compatibility conditions.

\section*{Acknowledgements}
This paper was started during the Island 3 meeting on Integrable
systems. It is my pleasure to thank the organizers and the
participants for excellent conference. It was especially important
for me to discuss theory of  the systems of Hydrodynamic type with
Maxim Pavlov, he explained to me many facts about Benney chain. It
is my pleasure to thank my colleagues Steve Shochet and Misha Sodin
for very stimulating discussions and help.

\section { Main Observations }
Let me denote by $\lambda _1,..,\lambda _n$ the roots of the
polynomial $F_p$. And let $r_i=F(\lambda _i)$ be the corresponding
critical values. The starting point for us is a beautiful classical
theorem by MacLane, stating that the mapping $(u_1...u_n) \mapsto
(r_1,..,r_n)$ is in fact a global diffeomorphism between the domain
of strict Hyperbolicity (that is the domain of all
$U=(u_1,\dots,u_n)^t$ where all the roots of the polynomial $F_p$
are real and distinct) with the domain of all possible critical
values in $\mathbb{R}^n$ that is of all those $(r_1,\dots,r_n)$ such
that the differences $(r_k-r_{k+1})$ have the sign $(-1)^{k+n}$ for
all $k=1,\dots,n-1$ (we refer to \cite{eremenko} for E.B. Vinberg's
proof of this theorem and further results and discussions).
According to this theorem $(r_1,..,r_n)$ can be taken as regular
global coordinates in this domain.

The importance of these coordinates for our system follow from the
following computation. Substitute $p=\lambda _i$ into the equation
(\ref{eq:conservation}). One gets the following diagonal system on
the variables $r_i$ (they are called Riemann invariants)
\begin{equation}
 \label{riemann}
 (r_i)_t+\lambda_i(r_1, \dots ,r_n) (r_i)_q=0 ,\quad  i=1,..,n
\end{equation}

The derivatives of the roots $\lambda_i$ with respect to the
critical values $r_i$ satisfy the following relations.
\begin{lemma}
\label {lemma1} The following formulas hold true
\begin{enumerate}[(a)]

\item $\partial _{r_i} \lambda_i=-\frac{1}{F_{pp}(\lambda_i )}
\sum_{k=1,k \neq i}^{n}\frac {1}{\lambda _i- \lambda _k} $,
\label
{derivative-1} \vspace{2mm}
\item $\partial _{r_i} \lambda_j=-\frac{1}{F_{pp}(\lambda_i)}
\frac {1}{\lambda _j-\lambda_i}, \quad i \neq j$, \label
{derivative-2} \vspace{2mm}
\item $ F_{pp}(\lambda_i) u_{r_i}=1$,\label {derivative-3}
\vspace{2mm}
\item $u_{{r_i}{r_k}}=\frac{2u_{r_i}u_{r_k}}{(\lambda _i- \lambda
_k)^2}$.
\label {derivative-4}
\end{enumerate}
\end{lemma}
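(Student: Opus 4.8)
The plan is to prove the four formulas of Lemma \ref{lemma1} by direct differentiation of the defining relations, exploiting the fact that the $\lambda_i$ are the roots of $F_p$ and that $r_i = F(\lambda_i)$ expresses the $r_i$ in terms of the $u$'s. The key observation to set up is that $F_p = \frac{1}{n+1}\cdot(n+1)\prod_{k=1}^n (p-\lambda_k)$, or rather $F_p(p)=\prod_{k=1}^n(p-\lambda_k)$ after the normalization $u_{-1}=1/(n+1)$, so that $F_p$ depends on $U$ only through the $\lambda_k$. I would treat $(r_1,\dots,r_n)$ as the independent coordinates (legitimate by MacLane's theorem in the hyperbolic domain) and compute $\partial_{r_i}$ of the various implicit relations.

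First I would establish (c). Differentiating $r_j = F(\lambda_j)$ with respect to $r_i$ gives $\delta_{ij} = F_p(\lambda_j)\,\partial_{r_i}\lambda_j + \sum_{\ell} F_{u_\ell}(\lambda_j)\,\partial_{r_i}u_\ell$. Since $F_p(\lambda_j)=0$, the first term drops and we are left with $\delta_{ij} = \sum_\ell F_{u_\ell}(\lambda_j)\,\partial_{r_i}u_\ell$. Now $F = \frac{1}{n+1}p^{n+1} + u_1 p^{n-1} + \dots + u_n$, so $F_{u_\ell} = p^{n-\ell}$; in particular $F_{u_1}=p^{n-1}$ and, recalling $u=u_1$, the $\ell=1$ term is $\lambda_j^{n-1}\,\partial_{r_i}u$. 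The cleanest route is to recognize that the vector $(F_{u_1}(\lambda_j),\dots,F_{u_n}(\lambda_j)) = (\lambda_j^{n-1},\dots,1)$ and that the relation $\delta_{ij} = \sum_\ell \lambda_j^{n-\ell}\,\partial_{r_i}u_\ell$ is a Vandermonde-type linear system whose solution for $\partial_{r_i}u_\ell$ can be read off. For (c) specifically one only needs $\partial_{r_i}u = \partial_{r_i}u_1$, which should come out to $1/F_{pp}(\lambda_i)$ by observing that the inverse Vandermonde entries are exactly the coefficients in the Lagrange interpolation formula, and $\prod_{k\neq i}(\lambda_i-\lambda_k) = F_{pp}(\lambda_i)$ up to the correct combinatorial factor coming from $F_p = \prod(p-\lambda_k)$, whence $F_{pp}(\lambda_i) = \prod_{k\neq i}(\lambda_i-\lambda_k)$.

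For (a) and (b) I would instead differentiate the identity $F_p(\lambda_j) = 0$ with respect to $r_i$: this yields $0 = F_{pp}(\lambda_j)\,\partial_{r_i}\lambda_j + \sum_\ell F_{p u_\ell}(\lambda_j)\,\partial_{r_i}u_\ell$. Since $F_{p u_\ell} = (n-\ell)p^{n-\ell-1}$ and $F_p = \prod_k(p - \lambda_k)$, the sum $\sum_\ell F_{pu_\ell}(\lambda_j)\,\partial_{r_i}u_\ell$ is the derivative of $F_p$ evaluated at $\lambda_j$ when one varies only the $r_i$-dependence through the coefficients — which by the product formula equals $\partial_{r_i}\big(\prod_k(\lambda_j - \lambda_k)\big)$ holding $p=\lambda_j$ fixed, i.e. $-\sum_{k}\big(\prod_{m\neq k}(\lambda_j-\lambda_m)\big)\partial_{r_i}\lambda_k$. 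Combining with the $F_{pp}(\lambda_j)\partial_{r_i}\lambda_j$ term and using $F_{pp}(\lambda_j)=\sum_k \prod_{m\neq k}(\lambda_j-\lambda_m)$ should, after separating the $j=i$ and $j\neq i$ cases and using (c) to pin down a normalization, give (a) and (b) directly. I expect the main bookkeeping obstacle to be keeping the combinatorial/Vandermonde factors straight and correctly handling the distinction between "vary $p$" and "vary the coefficients through $r_i$" in $F_p(\lambda_j(r))=0$; genuinely only (b) needs the simple $j\neq i$ computation, while (a) packages all the cross terms.

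Finally, (d) follows from (a)--(c) by a second differentiation: write $u_{r_i} = 1/F_{pp}(\lambda_i)$ from (c), then differentiate with respect to $r_k$ with $k\neq i$, so $u_{r_i r_k} = -F_{pp}(\lambda_i)^{-2}\,\partial_{r_k}\big(F_{pp}(\lambda_i)\big)$. Using $F_{pp}(\lambda_i) = \prod_{m\neq i}(\lambda_i-\lambda_m)$, the logarithmic derivative gives $\partial_{r_k} F_{pp}(\lambda_i) = F_{pp}(\lambda_i)\sum_{m\neq i}\frac{\partial_{r_k}\lambda_i - \partial_{r_k}\lambda_m}{\lambda_i-\lambda_m}$, and substituting the formulas (a), (b) for $\partial_{r_k}\lambda_i$ and $\partial_{r_k}\lambda_m$ — noting $\partial_{r_k}\lambda_i = -F_{pp}(\lambda_k)^{-1}(\lambda_i-\lambda_k)^{-1}$ for $i\neq k$ by (b) — the sum should telescope/collapse to leave exactly $\frac{2}{(\lambda_i-\lambda_k)^2}\cdot\frac{1}{F_{pp}(\lambda_i)F_{pp}(\lambda_k)} = \frac{2u_{r_i}u_{r_k}}{(\lambda_i-\lambda_k)^2}$. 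I anticipate the residual simplification in this last step — showing all the apparently many terms in $\sum_{m\neq i}$ cancel except for the one producing the clean square — to be the most delicate part of the computation, likely done most transparently via a partial-fractions or residue argument (note the paper declares a $\res$ operator), rather than brute force.
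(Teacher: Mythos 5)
Your treatment of (c) is sound and is really the paper's argument in Vandermonde clothing: the relation $\delta_{ij}=\sum_\ell\lambda_j^{\,n-\ell}\,\partial_{r_i}u_\ell$ says that the polynomial $F_{r_i}(p)=\sum_\ell p^{\,n-\ell}\,\partial_{r_i}u_\ell$ takes the value $\delta_{ij}$ at each $\lambda_j$, hence equals the Lagrange polynomial $l_i(p)=\prod_{s\neq i}(p-\lambda_s)/(\lambda_i-\lambda_s)$, whose leading coefficient $1/F_{pp}(\lambda_i)$ is $u_{r_i}$. But your proposed derivation of (a) and (b) collapses to a tautology. From $F_p(\lambda_j)=0$ you get $0=F_{pp}(\lambda_j)\partial_{r_i}\lambda_j+\sum_\ell F_{pu_\ell}(\lambda_j)\partial_{r_i}u_\ell$, and you then evaluate the sum as $\partial_{r_i}\bigl(\prod_k(\lambda_j-\lambda_k)\bigr)=-\sum_k\bigl(\prod_{m\neq k}(\lambda_j-\lambda_m)\bigr)\partial_{r_i}\lambda_k$. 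Every term with $k\neq j$ contains the factor $\lambda_j-\lambda_j=0$, so only $k=j$ survives and the sum equals $-F_{pp}(\lambda_j)\partial_{r_i}\lambda_j$; the equation reads $0=0$ and yields nothing. This is precisely the ``vary $p$ versus vary the coefficients'' pitfall you flagged: the product-formula computation merely restates the chain rule, it is not an independent evaluation of $F_{pr_i}(\lambda_j)$. The missing step — which you already have in hand from (c) but never deploy — is $F_{r_i}(p)=l_i(p)$, whence $F_{pr_i}(\lambda_j)=(l_i)_p(\lambda_j)$ explicitly: at $p=\lambda_i$ this is $\sum_{s\neq i}1/(\lambda_i-\lambda_s)$, giving (a), and for $j\neq i$ it is $\prod_{s\neq i,j}(\lambda_j-\lambda_s)\big/F_{pp}(\lambda_i)=F_{pp}(\lambda_j)\big/\bigl((\lambda_j-\lambda_i)F_{pp}(\lambda_i)\bigr)$, giving (b). The paper packages the same information in the identity $F_{r_i}(p)(p-\lambda_i)=u_{r_i}F_p(p)$, differentiated in $p$ and evaluated at $\lambda_i$ and $\lambda_j$.

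Your route to (d) is different from the paper's and does work: substituting (a) and (b) into $\partial_{r_k}\log\abs{F_{pp}(\lambda_i)}=\sum_{m\neq i}(\partial_{r_k}\lambda_i-\partial_{r_k}\lambda_m)/(\lambda_i-\lambda_m)$, the terms with $m\neq i,k$ simplify to $u_{r_k}/((\lambda_k-\lambda_i)(\lambda_k-\lambda_m))$ and recombine with the $m=k$ term to leave exactly $-2u_{r_k}/(\lambda_i-\lambda_k)^2$, from which (d) follows; this is elementary partial fractions, no residue argument needed. The paper instead avoids (a) entirely: it differentiates $(\lambda_j)_{r_i}=u_{r_i}/(\lambda_i-\lambda_j)$ with respect to $r_k$ for a third index $j\neq i,k$ and extracts (d) from the symmetry $(\lambda_j)_{r_ir_k}=(\lambda_j)_{r_kr_i}$ (the Gibbons--Tsarev argument); that version needs only (b) and (c) but requires $n\geq3$, whereas yours is a direct computation valid for all $n\geq2$. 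So parts (c) and (d) are fine, but (a) and (b) as written have a genuine gap that must be repaired along the lines above.
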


\begin{remark}

The condition (\ref{derivative-4}) is in fact the so called Gibbons
Tsarev compatibility condition (see \cite{gt})(note there is a
missprint in their formula-the factor $2$ is missing). I didn't find
however the formula (\ref{derivative-1}) in any paper on the
subject. We suggest the proof of all of them in Section \ref{lemma}
in a very short way.
\end {remark}

\begin{corollary}
\label {genuine} In the strictly hyperbolic region $\lambda_1 <
\lambda_2 < \dots < \lambda_n$ the maximal and minimal eigenvalues
are genuinely nonlinear in the sense of Lax:
$$\partial_{r_1} \lambda_1 \neq 0,\partial _{r_n} \lambda_n \neq 0 $$
\end{corollary}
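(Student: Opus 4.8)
The plan is to derive the genuine nonlinearity of the extreme eigenvalues directly from formula (\ref{derivative-1}) of Lemma \ref{lemma1}. The key observation is that for the minimal eigenvalue $\lambda_1$, every term in the sum $\sum_{k\neq 1} \frac{1}{\lambda_1-\lambda_k}$ has the same sign, since $\lambda_1 < \lambda_k$ for all $k \geq 2$; hence the sum is strictly negative and in particular nonzero. Similarly, for the maximal eigenvalue $\lambda_n$, every term in $\sum_{k\neq n}\frac{1}{\lambda_n-\lambda_k}$ is strictly positive because $\lambda_n > \lambda_k$ for $k \leq n-1$, so the sum is strictly positive. The only remaining point is that the prefactor $-1/F_{pp}(\lambda_i)$ is finite and nonzero: this holds in the strictly hyperbolic region because $F_p$ has simple roots there, so $F_{pp}(\lambda_i)\neq 0$ for each $i$.

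Concretely, I would proceed as follows. First, invoke (\ref{derivative-1}) to write
\begin{equation*}
\partial_{r_1}\lambda_1 = -\frac{1}{F_{pp}(\lambda_1)}\sum_{k=2}^{n}\frac{1}{\lambda_1-\lambda_k}.
\end{equation*}
Since $\lambda_1 < \lambda_k$ for every $k \geq 2$, each summand $\frac{1}{\lambda_1-\lambda_k}$ is negative, so the whole sum is strictly negative, hence nonzero. Combined with $F_{pp}(\lambda_1)\neq 0$ (simplicity of the roots), this gives $\partial_{r_1}\lambda_1 \neq 0$. Second, apply the same formula at $i = n$: all the differences $\lambda_n - \lambda_k$ with $k \leq n-1$ are positive, so $\sum_{k=1}^{n-1}\frac{1}{\lambda_n-\lambda_k} > 0$, and again $F_{pp}(\lambda_n)\neq 0$, giving $\partial_{r_n}\lambda_n \neq 0$. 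This establishes both claims.

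There is essentially no serious obstacle here — the statement is an immediate sign-counting consequence of Lemma \ref{lemma1}(a), which is the real content. The only thing to be careful about is the justification that $F_{pp}(\lambda_i)\neq 0$ in the strictly hyperbolic region; but this is exactly the condition that the roots of $F_p$ are simple, which is the definition of strict hyperbolicity, so it follows tautologically. I would also remark that the intermediate eigenvalues $\lambda_i$ with $1 < i < n$ need not be genuinely nonlinear, since in $\sum_{k\neq i}\frac{1}{\lambda_i-\lambda_k}$ the terms with $k < i$ and $k > i$ have opposite signs and may cancel; this is precisely why the argument is special to the extreme eigenvalues and why Lax's original method does not transfer directly to the full system.
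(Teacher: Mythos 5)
Your argument is correct and is precisely the intended one: the paper states Corollary \ref{genuine} without an explicit proof because it follows immediately from Lemma \ref{lemma1}(a) by exactly the sign-counting you describe, together with $F_{pp}(\lambda_i)\neq 0$ in the strictly hyperbolic region. Your closing remark about possible cancellation for the intermediate eigenvalues is also accurate and explains why only the extreme ones are claimed.
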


Another important consequence of the Gibbons-Tsarev conditions is
the following
\begin{corollary}
\label{exactness}
 In the Hyperbolic region introduce the functions
 $$G_i=-\frac {1}{2}\log\abs {u_{r_i}}=\frac
{1}{2}\log \abs {F_{pp}(\lambda_i)}$$ Then it follows from the
lemma that
$$\partial_{r_j}G_i=-\frac {u_{r_i r_j}}{2u_{r_i}}=-\frac{u_{r_j}}{(\lambda _i- \lambda
_j)^2}=\frac{(\lambda_i)_{r_j}}{\lambda_i-\lambda_j}$$.

\end{corollary}

In order to perform the blow up analysis for our system we shall
differentiate the quantities $w_i, w_i=(r_i)_q$ along the integral
curves of the family $\lambda_i$. They are, by definition, the
integral curves of the equation $\dot{q}+\lambda_i(q,t)=0$ on
$\mathbb{T}^2$.
 Let $v_i=(1,\lambda_i(q,t))$ be the $i$-th vector field and let
 $L_{v_i}=\partial_t + \lambda_i \partial_q$ denotes the Lie derivative along the field
 $v_i$.
 Differentiating with respect to $q$ the i-th equation of (\ref
 {riemann}) one gets the following
 \begin {equation}
 \label{differentiating}
 L_{v_i}(w_i)+w_i^2(\lambda _i)_{r_i}+w_i \sum_{j \neq i} (\lambda
 _i)_{r_j}(r_j)_q=0
 \end {equation}
Notice that by definition:
$$L_{v_i}r_j=(r_j)_t+\lambda_i(r_j)_q$$
 Subtract from this expression the
$j$-th equation of (\ref{riemann})
$$0=(r_j)_t+\lambda_j(r_j)_q$$
 one verifies that $$(r_j)_q=\frac {L_{v_i}{r_j}}{\lambda_i-\lambda_j}.$$
 Substitution of this expression into (\ref{differentiating}) leads to:
 \begin {equation}
 \label{differentiating 2}
 L_{v_i}(w_i)+w_i^2(\lambda _i)_{r_i}+w_i \sum_{j \neq i} (\lambda
 _i)_{r_j}\frac {L_{v_i}{r_j}}{\lambda_i-\lambda_j}=0
 \end {equation}
Therefore it follows from the last (Corollary \ref{exactness}) that
(\ref{differentiating 2}) can be rewritten in the following way
$$L_{v_i}(w_i)+w_i^2(\lambda
_i)_{r_i}+w_i\sum_{j \neq i}(G_i)_{r_j}L_{v_i}r_j =0.$$ Therefore,
taking into account the i-th equation of (\ref{riemann}) we get:
\begin{equation}
\label{differentiating 3} L_{v_i}(w_i)+w_i^2(\lambda
_i)_{r_i}+w_iL_{v_i}G_i=0
\end{equation}
Multiplying by $\exp{G_i}$ this equation one rids of the linear term
as follows:
\begin{equation}
L_{v_i}((\exp{G_i})(w_i))+ (\exp{(-G_i)}(\lambda
_i)_{r_i})(\exp({2G_i})w_i^2)=0
\end{equation}
Using the explicit expression for $G_i$  of Corollary
\ref{exactness} and denoting
$$z_i=\abs {F_{pp}(\lambda_i)}^{1/2}w_i=\abs
{F_{pp}(\lambda_i)}^{1/2}(r_i)_q, \quad K_i=\abs
{F_{pp}(\lambda_i)}^{-1/2}(\lambda_i)_{r_i}$$ we get the following
equation, which is crucial for the analysis of the blow up of the
solution.
\begin{equation}
\label{blow up}
 L_{v_i}z_i+K_iz_i^2=0, \quad i=1,\dots,n
\end{equation}
As an immediate consequence of this equation we state the following
\begin{theorem}
Let $U=(u_1,\dotsc,u_n)^t$ be a periodic solution of the
quasi-linear system (\ref{system}) corresponding to the strictly
Hyperbolic regime, i.e. all eigenvalues are real and distinct:
$\lambda_1<\lambda_2< \dots<\lambda_n$. Then the Riemann invariants
 $r_1$ and $r_n$ corresponding to the minimal and maximal eigenvalues are
constants.
\end{theorem}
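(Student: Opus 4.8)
The plan is to read the conclusion off the scalar Riccati equation (\ref{blow up}), which is precisely Lax's blow-up mechanism, exploiting that its quadratic coefficient does not degenerate for the extreme eigenvalues. Since $U(q,t)$ is a smooth solution whose values lie in the strictly hyperbolic region, all of the quantities $\lambda_i,\ r_i,\ F_{pp}(\lambda_i),\ K_i,\ z_i$ are well defined and smooth on the compact connected torus $\mathbb{T}^2$.

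Fix $i\in\{1,n\}$. The roots of $F_p$ being simple, $F_{pp}(\lambda_i)\neq 0$ everywhere; together with Corollary \ref{genuine}, which gives $(\lambda_i)_{r_i}\neq 0$, this makes $K_i=\abs{F_{pp}(\lambda_i)}^{-1/2}(\lambda_i)_{r_i}$ a continuous nowhere-vanishing function on $\mathbb{T}^2$, hence of constant sign with $\abs{K_i}\ge\delta$ for some $\delta>0$. Also $z_i=\abs{F_{pp}(\lambda_i)}^{1/2}(r_i)_q$ is a globally defined smooth (in particular bounded) function on $\mathbb{T}^2$, and the field $v_i=(1,\lambda_i)$ has smooth bounded coefficients, so each of its integral curves $\gamma$ is defined for all $t\in\mathbb{R}$ and is parametrized by $t$ with $L_{v_i}$ the derivative along it.

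Along such a curve $\gamma$ equation (\ref{blow up}) reads $\dot y=-K_iy^2$ for the function $y(t):=z_i(\gamma(t))$, which is defined for all $t\in\mathbb{R}$. I claim $y\equiv 0$. If not, say $y(t_0)\neq 0$; by uniqueness for this scalar ODE $y$ then vanishes nowhere on $\gamma$, so $h:=1/y$ is a continuous function on all of $\mathbb{R}$ with $\dot h=K_i$, that is $h(t)=h(t_0)+\int_{t_0}^{t} K_i(\gamma(s))\,ds$. Since $K_i$ keeps a fixed sign and $\abs{K_i}\ge\delta$, this $h$ is strictly monotone and unbounded both above and below, so it attains the value $0$ --- impossible, as $h=1/y$. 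This contradiction gives $y\equiv 0$ on $\gamma$. The integral curves of $v_i$ cover $\mathbb{T}^2$, so $z_i\equiv 0$ on $\mathbb{T}^2$, i.e. $(r_i)_q\equiv 0$; inserting this into the $i$-th equation of (\ref{riemann}) yields $(r_i)_t\equiv 0$, so $r_i$ is constant. Taking $i=1$ and $i=n$ proves the theorem.

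The argument is short once (\ref{blow up}) and Corollary \ref{genuine} are available; the points requiring care are only bookkeeping ones --- completeness of the characteristics, which is automatic on $\mathbb{T}^2$, and the passage to $h=1/y$, which is legitimate because $y\equiv 0$ is the unique solution of $\dot y=-K_iy^2$ vanishing at a point. I do not anticipate a genuine obstacle beyond these; the essential nonlinearity input is exactly the nonvanishing of $K_1$ and $K_n$ furnished by Corollary \ref{genuine}.
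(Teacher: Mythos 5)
Your proposal is correct and takes essentially the same route as the paper: the paper also deduces from the nonvanishing of $K_1,K_n$ (via Corollary \ref{genuine} and compactness of $\mathbb{T}^2$) that the explicit solution formula for the Riccati equation (\ref{blow up}) forces blow-up in finite time along characteristics unless $z_i\equiv 0$. Your passage to $h=1/y$ is just that explicit formula written out carefully, so the two arguments coincide.
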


\begin{proof}
This fact follows immediately from the equation (\ref{blow up}).
Indeed, in the region of strict hyperbolicity the
$F_{pp}(\lambda_i)$ does not vanish and so, by the genuine
non-linearity of $\lambda_1$ and $\lambda_n$ the functions $K_1,K_n$
are bounded away from zero. Then it follows from the explicit
formula for the solutions of (\ref{blow up}) that the only solution
which does not explode in a finite time is $z_1,z_n=0$. Thus
$(r_1)_q=(r_n)_q=0$ and the equations (\ref{riemann}) imply that
$r_1$ and $r_n$ must be constants. This yields the result.
\end{proof}
A refinement of this argument is the content of the next section on
the Hyperbolic region, for 3 by 3 system. In what follows we shall
assume that $n=3$, i.e. the system is 3 by 3. Let me denote by
$\Omega_h$ be the region of strict Hyperbolicity and $\Omega_e$ be
the region, where there are two complex conjugate eigenvalues for
$A(U)$. The complement, $\mathbb{T}^2-(\Omega_h \cup \Omega_e)$ is
the set of those points $(q,t)$ where the matrix $A(U)$ has at least
two equal eigenvalues . We shall denote this set $\Omega_0$. And
finally, $\Omega_{00}$ will denote the set of maximal degeneration,
i.e. where all three eigenvalues are equal and thus equal to zero
(since the sum of all the three eigenvalues vanishes).

\section{Hyperbolic region $\Omega_h$.}
\label{hyperbolic} Before stating the main result of this section,
let me rewrite the formulas of
 the Lemma \ref{lemma1} for the case of 3 by 3 system.

 In this case since $\lambda_1+\lambda_2+\lambda_3=0$ we get the
 following simplifications
 $$ (\lambda_i)_{r_i}=-\frac{3\lambda_i}{F_{pp}(\lambda_i)^2}, \quad
 K_i=-\frac{3\lambda_i}{\abs{F_{pp}(\lambda_i)}^{5/2}}, \quad
 F_{pp}(\lambda_i)= \prod_{j \neq i}(\lambda_i-\lambda_j)$$

For the case $n=3$ we have the following refinement of the theorem
of the previous section.

 \begin{theorem}
\label{th:blow up}Let $U=(u_1,u_2,u_3)^t$ be a periodic solution of
the system (\ref{system}), and let $\Omega_h \subseteq \mathbb{T}^2$
be the domain of strict Hyperbolicity. Then
  the
 Riemann invariants $r_1, r_3$ are constants in every connected component of
 the Hyperbolic domain $\Omega_h$.
 \end{theorem}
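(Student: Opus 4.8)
The plan is to run the blow-up argument of the previous section, but now pay attention to the fact that the Hyperbolic domain $\Omega_h$ need not be all of $\mathbb{T}^2$: its boundary consists of points where two eigenvalues collide, and there $F_{pp}(\lambda_i)$ vanishes, so $z_i$ and $K_i$ are no longer controlled. The key observation is that for the extreme indices $i=1$ and $i=3$ we do not actually need global boundedness of $K_i$ — we only need that $K_1$ and $K_3$ do not change sign, together with the fact that the integral curves of the fields $v_1,v_3$ are complete on the torus. First I would recall from the simplified formulas $K_i=-3\lambda_i/\abs{F_{pp}(\lambda_i)}^{5/2}$ that on $\Omega_h$ the sign of $K_1$ equals $-\operatorname{sgn}(\lambda_1)$ and the sign of $K_3$ equals $-\operatorname{sgn}(\lambda_3)$; since $\lambda_1<\lambda_2<\lambda_3$ and $\lambda_1+\lambda_2+\lambda_3=0$, we have $\lambda_1<0<\lambda_3$ throughout $\Omega_h$, so $K_1>0$ and $K_3<0$ everywhere on $\Omega_h$ — a definite, non-vanishing sign.

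Next I would fix a connected component $\Omega\subseteq\Omega_h$ and argue componentwise. Along a characteristic curve of $v_1$, equation (\ref{blow up}) reads $L_{v_1}z_1+K_1z_1^2=0$ with $K_1>0$. I would like to conclude $z_1\equiv 0$ on $\Omega$. The point is that the vector field $v_1=(1,\lambda_1)$ is smooth and bounded on all of $\mathbb{T}^2$ (it is defined by the continuous solution $U$), so its integral curves exist for all time $t\in\mathbb{R}$ and stay on the compact torus. The Riccati-type equation $\dot z_1 = -K_1 z_1^2$ along such a curve: if at some point $z_1>0$, then following the curve backward in $t$ the solution blows up in finite backward time unless $K_1$ decays fast enough — here I would use that the characteristic stays in a compact set, so I must instead argue forward/backward using the sign. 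Concretely: where $z_1>0$ we get $\dot z_1<0$, and where $z_1<0$ we get $\dot z_1>0$; writing $y_1=1/z_1$ (on an interval where $z_1\neq 0$) gives $\dot y_1=K_1>0$, so $y_1$ is strictly increasing; if $z_1$ were ever negative it would have to pass through $0$ going backward, but $\dot z_1\big|_{z_1=0}=0$ forces $z_1\equiv 0$ by uniqueness for the linear-in-$z_1$-near-zero ODE — so $z_1$ has a fixed sign along each characteristic. If that sign is positive on some characteristic, then $y_1$ increases without bound as $t\to+\infty$, i.e. $z_1\to 0^+$; running backward, $y_1$ decreases and reaches $0$ in finite time, i.e. $z_1$ blows up — contradicting that $z_1=\abs{F_{pp}(\lambda_1)}^{1/2}(r_1)_q$ is bounded on the torus (the characteristic returns to compact regions; I would need to be careful only if the characteristic limits onto $\partial\Omega_h$). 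The honest way to close this is: because the characteristic curve of $v_1$ through any point of $\Omega$ is complete and $\mathbb{T}^2$ is compact, and because $z_1$ solves a Riccati equation with coefficient $K_1$ of one fixed sign, the only bounded solution along the whole curve is $z_1\equiv 0$; hence $(r_1)_q\equiv 0$ on $\Omega$, and then the first Riemann equation $(r_1)_t+\lambda_1(r_1)_q=0$ gives $(r_1)_t\equiv 0$ as well, so $r_1$ is constant on $\Omega$. The same argument with $i=3$, $K_3<0$, gives $r_3$ constant on $\Omega$.

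The main obstacle, and the step I would spend the most care on, is exactly the behavior of the characteristic curves of $v_1$ and $v_3$ near the boundary $\partial\Omega_h=\Omega_0$: there $F_{pp}(\lambda_1)\to 0$, so $K_1$ could blow up and $z_1$ could in principle stay bounded even while $(r_1)_q$ does not, or a characteristic could spend a long time near $\Omega_0$. I would handle this by noting that the \emph{eigenvalue} $\lambda_1$ itself (not its reparametrization) extends continuously to $\bar\Omega_h$ and stays $\le 0$, so $v_1$ extends continuously to the closure; a characteristic of $v_1$ either stays in $\Omega$ for all time — in which case the Riccati argument above applies directly — or it exits through $\Omega_0$, and on $\Omega_0$ two eigenvalues coincide, which (as will be used in Sections \ref{elliptic} and \ref{degenerate}) forces strong constraints that I can feed back. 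Alternatively, and more cleanly, I would work with the bounded quantity $w_1=(r_1)_q$ directly in (\ref{differentiating 3}), $L_{v_1}w_1+w_1^2(\lambda_1)_{r_1}+w_1 L_{v_1}G_1=0$, and exploit that $\exp(G_1)=\abs{F_{pp}(\lambda_1)}^{1/2}$ vanishes on $\partial\Omega_h$ rather than blows up, so the substitution $z_1=\exp(G_1)w_1$ is the \emph{safe} direction: $z_1$ stays bounded on $\bar\Omega_h$ and $K_1$ has fixed sign on $\Omega_h$, and a maximum-principle / integrated-Riccati estimate along complete characteristics then forces $z_1\equiv 0$, hence $w_1\equiv 0$ on $\Omega$. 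Thus the theorem reduces to the careful completeness-plus-sign argument sketched above, applied once to $i=1$ and once to $i=3$.
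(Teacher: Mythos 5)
Your overall strategy coincides with the paper's: reduce to the Riccati equation $L_{v_i}z_i+K_iz_i^2=0$ along the extreme characteristic families and use the definite sign of $K_1,K_3$ (genuine nonlinearity) to force $z_1=z_3\equiv 0$. However, there is a genuine gap at exactly the point you yourself flag as ``the main obstacle,'' and neither of your proposed ways around it closes it. A fixed sign of $K_1$ is \emph{not} enough to conclude that the only bounded solution of $\dot z_1=-K_1z_1^2$ on a complete characteristic is $z_1\equiv 0$: writing $y_1=1/z_1$ gives $\dot y_1=K_1$, and backward blow-up requires $\int K_1\,dt$ to diverge along the backward orbit, i.e.\ an effective lower bound on $K_1$ there; if $K_1$ were integrable backward (say, decaying as the orbit creeps toward $\partial\Omega_h$), a positive bounded $z_1$ would be perfectly consistent with the equation. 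Since $K_1=3|\lambda_1|/|F_{pp}(\lambda_1)|^{5/2}$ is only controlled on the open set $\Omega_h$, compactness of $\mathbb{T}^2$ by itself gives no such bound, and your fallback of ``feeding back strong constraints from $\Omega_0$'' or invoking an unspecified ``maximum-principle / integrated-Riccati estimate'' is where the actual content of the theorem lives.

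The paper closes the gap with two concrete ingredients that are absent from your write-up. First, $(r_1)_q=F_q(\lambda_1,q,t)$ is bounded by periodicity, so $z_1=|F_{pp}(\lambda_1)|^{1/2}(r_1)_q\to 0$ whenever a characteristic approaches $\partial\Omega_h$; hence if $z_1(q_0,t_0)>0$, the backward orbit, along which $z_1$ is monotone increasing and therefore $\ge z_1(q_0,t_0)$, can never reach the boundary. Second, for an orbit trapped backward in $\Omega_h$, the same monotone bound together with $|(r_1)_q|<M_1$ forces $|F_{pp}(\lambda_1)|^{1/2}>z_1(q_0,t_0)/M_1$, while the trace-zero relation $\lambda_1+\lambda_2+\lambda_3=0$ gives $|F_{pp}(\lambda_1)|\le 6\lambda_1^2$, so that $|\lambda_1|$, and hence $K_1$, is bounded away from zero along the orbit and the Riccati solution explodes in finite backward time. (In fact these inequalities even yield the uniform bound $K_1\ge 3\cdot 6^{-5/2}\left(\sup|\lambda_1|\right)^{-4}$ on all of $\Omega_h$, which would rescue your ``compactness'' intuition --- but some such estimate must be proved, and your proposal does not prove it.) Without these two steps your argument establishes only the sign structure, not the conclusion.
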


 \begin{proof} We give the proof for $r_1$, the other case is analogous.
 The first step of the proof is the fact that
 the derivatives $(r_i)_q$ are bounded functions on the domain
 $\Omega_h$. Indeed, by  definition,
 $$r_i=F(\lambda_i,q,t)\Rightarrow
 (r_i)_q=F_q(\lambda_i, q,t)$$
 and thus by the periodicity of the
 coefficients of the polynomial $F$, all roots $\lambda_i$ of the derivative $F_p$  are
 bounded and so are the derivatives $(r_i)_q$.
Consider the integral curves of the $\lambda_1$-family in the domain
$\Omega_h$. Suppose that such a curve approaches the boundary of
$\Omega_h$, then $F_{pp}\rightarrow 0$ while $(r_i)_q$ stays
bounded. Therefore, it follows that
 $$z_i=(r_i)_q|F_{pp}(\lambda_i)|^{1/2}\rightarrow 0.$$
  I claim that then $z_1$ equals zero identically in
 $\Omega_h$. If, for example, $z_1$ is positive at a point
 $(q_0,t_0)$
 then by the equation (\ref{blow up}) and the fact that $K_1$ is
 positive, we have that $z_1$ is a decreasing function of time and thus
 in the backward time along the integral curve cannot approach
 the boundary, because on the boundary $z_1$ vanishes. On the other hand,
 if the integral curve stays inside $\Omega_h$ forever in the backward
 time, then the function $K_1$ stays
 bounded away from zero, and therefore the blow up of the solution must
 occur
 in a finite (backward) time. Thus $z_1$ can not be positive.
 The opposite case, when $z_1(q_0,t_0)$ can not be negative is completely analogous.

 This argument in a more precise form looks as follows.
Denote by $M_1,M_2$
 positive constants  such that
 $$\abs{(r_1)_q}< M_1 , \quad \sqrt{\abs { F_{pp}(\lambda_i)}}<M_2.$$
 Then for any backward time along the integral curve
 the
 monotonicity of $z_1$ implies that

$$M_1\sqrt{\abs { F_{pp}(\lambda_1)}}>(r_1)_q \sqrt{\abs { F_{pp}(\lambda_1)}}=z_1\geq
z_1(q_0,t_0)$$ and therefore
 $$\sqrt{\abs { F_{pp}(\lambda_1)}}>\frac {z_1(q_0,t_0)}{M_1}.$$ In
 addition,
$ \lambda_1$ can not be too close to zero. Indeed, if $0\le
-{\lambda_1}<a$, then by the zero sum condition also
$\abs{\lambda_2}<a$ and $0\le{\lambda_3}<2a$. Then one would get
$$\sqrt{\abs { F_{pp}(\lambda_i)}}=((\lambda_2-\lambda_1)(\lambda_3-\lambda_1))^{1/2}<\sqrt{6}
 a.$$
So together with the previous estimate this implies that
$\abs{\lambda_1}\geq \frac {z_1(q_0,t_0)}{\sqrt{6} M_1}$. Then
$$K_1=
\frac{3\abs{\lambda_1}}{((\lambda_2-\lambda_1)(\lambda_3-\lambda_1))^{5/2}}
\geq \frac {3z_1(q_0,t_0)}{\sqrt{6}M_1M_2^5}>0$$ So $K_1$ is bounded
away from zero, and again by the explicit formula for the solution
of (\ref{blow up}) it explodes in a finite backward time. This
proves the claim that $z_1$ vanishes identically in $\Omega_h$. Thus
$z_1,(r_1)_q\equiv0$, and so by the equations (\ref{riemann}) $r_1$
must be constants.
\end{proof}
The next theorem  describes completely the solutions of the system
in the Hyperbolic region $\Omega_h$ .
\begin{theorem}
\label {Omegah}Either the solution $U=(u_1,u_2,u_3)^t$ is a constant
solution for (\ref{system}) on $\mathbb{T}^2$, or the region
$\Omega_h$ is a union of strips on the torus parallel to the $t$
-axes and the following relations hold
$$u_1=u_1(x), u_2 \equiv0, u_3=u_1^2+const,$$
so that the polynomial $F$  equals (up to a constant) in $\Omega_h$
 to the square of the Hamiltonian
$$F=\left(\frac{p^2}{2}+u_1(q)\right)^2 +const.$$
\end{theorem}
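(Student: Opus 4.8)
The plan is to combine Theorem~\ref{th:blow up} with a sharpened version of the Riccati/blow-up argument behind it. Fix a connected component $\Omega$ of $\Omega_h$; by Theorem~\ref{th:blow up}, $r_1\equiv c_1$ and $r_3\equiv c_3$ are constant on $\Omega$. The first step would be a blow-up lemma: \emph{at every point of $\Omega_h$ where $\lambda_2\neq0$ one has $(r_2)_q=0$.} To prove it, let $\gamma$ be the integral curve of the $\lambda_2$-family through such a point, and follow it as long as it stays in $\Omega_h$, hence in a single component $\Omega$ with $r_1\equiv c_1$, $r_3\equiv c_3$; since $r_2$ is constant along its own characteristic and (by MacLane--Vinberg) each eigenvalue is a function of the Riemann invariants, $\lambda_2$ equals a fixed nonzero constant $\bar\lambda$ along $\gamma$. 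Consequently, in equation~(\ref{blow up}) with $i=2$ the coefficient $K_2=-3\bar\lambda/|F_{pp}(\lambda_2)|^{5/2}$ keeps a fixed sign and $|K_2|$ is bounded away from zero along $\gamma$ (the $u_i$ are smooth on $\mathbb{T}^2$, so $|F_{pp}(\lambda_2)|$ is bounded above). On the other hand $z_2=|F_{pp}(\lambda_2)|^{1/2}(r_2)_q$ is bounded on all of $\Omega_h$, because $(r_2)_q=F_q(\lambda_2,q,t)$ and $F_{pp}(\lambda_2)$ are built from the bounded quantities $\lambda_2,u_i,(u_i)_q$. Now if $z_2\neq0$ at the base point, then in the time direction --- forward or backward according to the signs of $\bar\lambda$ and $z_2$ --- in which the Riccati equation~(\ref{blow up}) makes $|z_2|$ non-decreasing, $z_2$ would blow up in finite time; being bounded it cannot, so $\gamma$ must leave $\Omega_h$ first. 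But on $\partial\Omega_h$ two eigenvalues collide, so $F_{pp}(\lambda_2)\to0$ and hence $z_2\to0$ there, contradicting that $|z_2|$ stays bounded below by a positive constant in that direction. Hence $(r_2)_q=0$ at the base point.

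The second step uses the lemma to obtain a dichotomy on each component $\Omega$: \emph{either $\lambda_2\equiv0$ on $\Omega$, or $\lambda_2$ never vanishes on $\Omega$ and $U$ is constant on $\Omega$.} Indeed, put $Z=\{\lambda_2=0\}\cap\Omega$; on the open set $\Omega\setminus Z$ the lemma gives $(r_2)_q\equiv0$, hence from the $r_2$-equation $(r_2)_t\equiv0$, so on each connected component $V$ of $\Omega\setminus Z$ the invariant $r_2$ --- and therefore, via $r_1\equiv c_1$, $r_3\equiv c_3$ and the MacLane--Vinberg correspondence, the whole of $U$ --- is constant. If $V\neq\Omega$ it has a boundary point $p^*\in\Omega$, and by continuity $U(p^*)$ equals the constant value of $U$ on $V$, a strictly hyperbolic matrix with $\lambda_2\neq0$; but $p^*\in\partial V\subseteq Z$ forces $\lambda_2(p^*)=0$, a contradiction. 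So $\Omega\setminus Z$ is either empty --- then $\lambda_2\equiv0$ on $\Omega$ --- or all of $\Omega$ --- then $\lambda_2$ never vanishes and $U$ is constant on $\Omega$. In the second case the same continuity argument shows $\Omega$ has no boundary in $\mathbb{T}^2$, so $\Omega=\mathbb{T}^2$ and $U$ is constant on the whole torus, which is the first alternative of the theorem.

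It remains to treat a component $\Omega$ with $\lambda_2\equiv0$. There $u_2=-\lambda_1\lambda_2\lambda_3=0$, so $F_p=p(p^2+2u_1)$ with $u_1<0$ and $\lambda_3^2=-2u_1$, while $r_2=F(0)=u_3$; the $r_2$-equation gives $(u_3)_t=0$, so $u_3=u_3(q)$, and then $c_3=r_3=F(\lambda_3)=-\tfrac14\lambda_3^4+u_3$ yields $u_3=u_1^2+c_3$, whence $u_1=-\sqrt{u_3(q)-c_3}=u_1(q)$ and $F=(p^2/2+u_1(q))^2+c_3$. Since $U$ now depends only on $q$ on $\Omega$, continuity of $U$ together with the openness of $\Omega_h$ forces every vertical circle $\{q_0\}\times\mathbb{T}$ that meets $\Omega$ to lie entirely in $\Omega$, so $\Omega$ is a union of strips parallel to the $t$-axis. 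Finally, since the presence of even one component on which $\lambda_2$ does not vanish already puts us in the first alternative, in the remaining situation every component of $\Omega_h$ is of this strip type, and patching them gives the second alternative.

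The step I expect to be the main obstacle is the blow-up lemma, and within it the point that a non-exploding characteristic must actually reach $\partial\Omega_h$ in finite time rather than winding around the torus forever: this is exactly where both the a priori bound on $z_2$ and the lower bound on $|K_2|$ along $\gamma$ --- the latter resting on $\lambda_2$ being a nonzero constant along $\gamma$, which in turn uses Theorem~\ref{th:blow up} --- are indispensable, and one must also keep the four sign/direction cases organized. A lesser technical point is the repeated appeal to the MacLane--Vinberg correspondence to turn equalities of Riemann invariants into equalities of the matrices $U$.
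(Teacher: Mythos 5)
Your proof is correct, but it takes a genuinely different route from the paper's. The paper first observes that $r_2>r_1$ and $r_2>r_3$ hold strictly inside $\Omega_h$ while one of these becomes an equality on $\partial\Omega_h$; since $r_1,r_3$ are constants and $r_2$ is constant along its own characteristics, no $\lambda_2$-characteristic can reach the boundary. Because $\lambda_2=\lambda_2(r_2)$ once $r_1,r_3$ are frozen, these characteristics are straight lines, and the paper splits into the case of two intersecting lines (forcing $r_2$, hence $U$, to be constant and $\Omega_h=\mathbb{T}^2$) and the case of parallel lines of slope $\mu$, where the identity $u_2=-2\mu u_1-\mu^3$ combined with $(u_1)_t=-(u_2)_q$ and constancy along characteristics forces $\mu=0$ unless $U$ is constant. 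You instead rerun the Riccati blow-up analysis of Theorem \ref{th:blow up} on the middle family: the new ingredient that makes $K_2$ controllable is exactly that $\lambda_2$ is a nonzero constant along its own characteristic once $r_1,r_3$ are known to be constant, which yields $(r_2)_q=0$ wherever $\lambda_2\neq0$, and your point-set dichotomy then replaces the paper's intersecting-versus-parallel case analysis and the elimination of the slope $\mu$. Your version handles characteristics that reach $\partial\Omega_h$ directly (via $z_2\to0$ there) rather than excluding them; the paper's version avoids a second blow-up estimate and is shorter once Theorem \ref{th:blow up} is in place. Both arguments rest on the same two pillars (constancy of $r_1,r_3$ and the MacLane--Vinberg coordinates), and your endgame in the $\lambda_2\equiv0$ strips coincides with the paper's computation of $u_2\equiv0$ and $u_3=u_1^2+const$.
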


\begin{proof}
Let me note, that since $r_i$ are successive critical values of the
polynomial $F_p$, then everywhere in $\Omega_h$ holds $r_2>r_1,
r_2>r_3$. On the boundary $\partial\Omega_h$ two of the eigenvalues
collide, say $\lambda_2$ collides with $\lambda_1$ (or with
$\lambda_3$ ,or maybe both), and hence $r_2=r_1$ ( or $ r_2=r_3)$.
By the previous theorem $r_1,r_3$ are constants in $\Omega_h$, and
in addition $r_2$ has constant values along the integral curves of
the $\lambda_2$-family. Then it follows, that non of these curves
can approach the boundary, because otherwise this would give
$r_1=r_2$ in the inner points. Moreover, the function $r_2$
satisfies the equation
$$(r_2)_t+\lambda_2(r_2)(r_2)_q=0,$$
where $\lambda_2(r_2)$ depends only on $r_2$, since $r_1,r_3$ are
constants in $\Omega_h$. Therefore, the characteristics of this
equation are the straight lines in the $(q,t)$-plane, and thus there
are two possible cases. The first case is that there exist two
intersecting straight lines of the family. Then $r_2$ has to be
constant everywhere ( since $r_2$ has constant values along
characteristics, and any straight line intersects at least one of
the two intersecting characteristics). So in this case all
$r_1,r_2,r_3$ are constants everywhere in $\Omega_h$, and hence also
the coefficients of the polynomial $u_1,u_2,u_3$ are constants. In
such a case $\Omega_h$ must the whole $\mathbb{T}^2$.

In the second case all the characteristic straight lines are
parallel with the same $\lambda_2=\mu=const$ , and thus the domain
$\Omega_h$ in this case is a union of parallel strips with the slope
$\mu$. Next we claim that if the solution $U=(u_1,u_2,u_3)^t$ is not
constant in $\Omega_h$, then $\mu=0$. To see this, let me recall
that
$$F_p(\mu)=\mu^3+2u_1\mu+u_2=0,$$ and therefore
\begin{equation}
\label{vyrazhenie} u_2=-2 \mu u_1-\mu^3
\end{equation}
Substituting (\ref{vyrazhenie}) into the first equation of the
system (\ref {system}) we have
$$(u_1)_t=-(u_2)_q=2\mu (u_1)_q$$
In addition we have that along any characteristic straight line of
the family $\lambda_2$ the values of $r_1,r_2,r_3$ are constants,
and then also $\lambda_1,\lambda_2,\lambda_3,u_1,u_2,u_3$, because
$r_1,r_2,r_3$ are genuine coordinates. In order to prove the claim
let us assume that on the contrary $\mu\neq0$.
 Then $u_1$ has to be globally constant in
$\Omega_h$ because it satisfies the following two equations
\begin{equation}
\label{mu} (u_1)_t-2\mu (u_1)_q=0, \quad (u_1)_t+\mu (u_1)_q=0
\end{equation}
But then, by (\ref{vyrazhenie}), also $u_2$ is constant in
$\Omega_h$. Therefore $\lambda_1,\lambda_2,\lambda_3$ are all
constants, since the polynomial $F_p$ has constant coefficients.
Also since $u_3=r_1-F(\lambda_1)$ then also $u_3$ must be a
constant. So the solution is in fact a constant solution
contradicting the assumption of the claim.

Thus, we get that $\mu=0$. In this case all characteristics of the
second family are parallel to the $t$-axes and the region $\Omega_h$
is the union of strips parallel to the $t$-axes. Moreover by
(\ref{mu}) $u_1, u_3$ have to be the functions on $q$ only and by
(\ref{vyrazhenie}) $u_2\equiv0$.

 Therefore $F_p=p(p^2+2u_1)$, and so
 $$F=\frac{1}{4}p^4+u_1p^2+u_3=\left(\frac{p^2}{2}+u_1\right)^2+u_3-u_1^2,$$
 and since $r_1,r_3$ are constants, then $u_3-u_1^2=constant$,
 and we are done. Notice that on the boudary of $\Omega_h$,
 $u_1$ vanishes and so $\partial \Omega_h\subseteq\Omega_{00}$.
 \end{proof}
 \section{Elliptic region $\Omega_e$}
 \label{elliptic}
 Consider now the region $\Omega_e$ where the polynomial $F_p$
 has two complex conjugate roots, say $\lambda_{1,2}=\alpha\pm
 i\beta$  with $\beta>0$ in $\Omega_e$, and
 $\lambda_3\in\mathbb{R}$. In this case $r_{1,2}$ are also complex
 conjugate, say $r_{1,2}=v\pm iw$ and $r_3$ is real.
 Notice, that for the points of the boundary of $\Omega_e$ we have
 $\lambda_1=\lambda_2=-\lambda_3/2$ are real, $r_1=r_2$ are real also, and so  $\beta=0,w=0$.
 For the region $\Omega_e$ we have the same description of solutions as in the strictly
 Hyperbolic domain, but for
 completely different reasons.
 \begin{theorem}
\label{omegae} Either $U=(u_1,u_2,u_3)^t$ is a constant solution for
the system (\ref{system}) on the whole $\mathbb{T}^2$ , or the
region $\Omega_e$ is a union of strips parallel to the $t$-axes on
the torus and and the following relations hold
$$u_1=u_1(q), u_2 \equiv0, u_3=u_1^2+const$$
So that the polynomial $F$  equals (up to a constant) in $\Omega_e$
to the square of the Hamiltonian
$$F=\left(\frac{p^2}{2}+u_1(q)\right)^2 +const.$$
\end{theorem}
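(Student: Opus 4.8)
The plan is to mimic the structure of the proof in the hyperbolic case, but now working with the real and imaginary parts $v,w$ of the complex Riemann invariant $r_1 = v + \I w$ (with $\lambda_1 = \alpha + \I\beta$), together with the real invariant $r_3$. First I would write out the system (\ref{riemann}) for $n=3$ in this elliptic regime: the complex equation $(r_1)_t + \lambda_1 (r_1)_q = 0$ splits into two real equations for $v$ and $w$, and there is the real equation $(r_3)_t + \lambda_3 (r_3)_q = 0$. The key structural input is that, just as the formulas of Lemma \ref{lemma1} and Corollary \ref{exactness} were the engine in the hyperbolic case, their analytic continuation remains valid in $\Omega_e$: the quantities $F_{pp}(\lambda_1)$, $(\lambda_1)_{r_1}$, etc., are now complex but the algebraic identities (in particular $(\lambda_i)_{r_i} = -3\lambda_i/F_{pp}(\lambda_i)^2$ and $F_{pp}(\lambda_i) = \prod_{j\neq i}(\lambda_i - \lambda_j)$) persist by analytic continuation in the $r$'s.

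The crucial point that makes the elliptic case genuinely different is that on $\Omega_e$ one has $\beta > 0$, so $F_{pp}(\lambda_1) = (\lambda_1 - \lambda_2)(\lambda_1 - \lambda_3) = (2\I\beta)(\lambda_1 - \lambda_3)$ never vanishes in the \emph{interior}, and it vanishes on $\partial\Omega_e$ precisely because $\beta \to 0$ there. So I would argue: the field $v_3 = (1,\lambda_3)$ is a genuine real vector field on $\Omega_e$, and $r_3$ is constant along its integral curves. Moreover the blow-up equation (\ref{blow up}) for $i=3$ — namely $L_{v_3} z_3 + K_3 z_3^2 = 0$ with $z_3 = |F_{pp}(\lambda_3)|^{1/2}(r_3)_q$ and $K_3 = -3\lambda_3/|F_{pp}(\lambda_3)|^{5/2}$ — continues to hold, since the $\lambda_3$-characteristics and the corresponding derivation are entirely real. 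As a $\lambda_3$-integral curve approaches $\partial\Omega_e$ we have $\lambda_1,\lambda_2 \to$ (real, equal), hence $F_{pp}(\lambda_1) \to 0$, but I must check that $F_{pp}(\lambda_3) = (\lambda_3-\lambda_1)(\lambda_3-\lambda_2)$ does \emph{not} degenerate and that $(r_3)_q$ stays bounded (again by periodicity of the coefficients of $F$, exactly as in the proof of Theorem \ref{th:blow up}). Running the same dichotomy — either $z_3$ explodes in finite backward time, or $z_3 \equiv 0$ — forces $(r_3)_q \equiv 0$ and hence $r_3$ constant in every component of $\Omega_e$, provided $\lambda_3$ is bounded away from zero along such curves. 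For that I would again use the zero-sum condition $\lambda_1 + \lambda_2 + \lambda_3 = 0$, i.e. $\lambda_3 = -2\alpha$, so $\lambda_3$ small forces $\alpha$ small, and then one estimates $K_3$ from below exactly as $K_1$ was estimated in Theorem \ref{th:blow up}.

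With $r_3$ constant on a component of $\Omega_e$, the remaining two real equations for $v$ and $w$ form a closed $2\times 2$ system in which $\lambda_1 = \lambda_1(r_1) = \lambda_1(v,w)$ depends only on $v,w$; equivalently $(r_1)_t + \lambda_1(r_1)(r_1)_q = 0$ is a complex Burgers-type equation with $\lambda_1$ a holomorphic function of $r_1$ alone. I would then apply the analogue of the straight-line-characteristics argument from Theorem \ref{Omegah}: for this reduced equation the characteristics are again straight lines (now the "slope" $\lambda_1$ is complex, but the real/imaginary parts still propagate along real lines in the $(q,t)$-plane because the characteristic speed is constant along characteristics), and one gets the same alternative — either two characteristics of the family cross, forcing $r_1$ (hence $v,w$) globally constant, so that $U$ is constant on all of $\mathbb{T}^2$; or all characteristics are parallel with a single complex slope $\mu$, in which case $w=0$ would follow on $\partial\Omega_e$ and one repeats verbatim the computation with $F_p(\mu) = \mu^3 + 2u_1\mu + u_2 = 0$, deriving $u_2 = -2\mu u_1 - \mu^3$, the two transport equations (\ref{mu}) for $u_1$, and the conclusion $\mu = 0$, $u_2 \equiv 0$, $u_1 = u_1(q)$, $u_3 = u_1^2 + \mathrm{const}$, with $F = (p^2/2 + u_1(q))^2 + \mathrm{const}$ and $\partial\Omega_e \subseteq \Omega_{00}$.

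The main obstacle I anticipate is the careful justification of the boundary behaviour of the $\lambda_3$-characteristics: one must rule out that a $\lambda_3$-integral curve spirals or accumulates on $\partial\Omega_e$ in a way that keeps $z_3$ bounded without forcing $z_3 \equiv 0$, and one must confirm that the analytically-continued formula (\ref{blow up}) for $i=3$ genuinely involves only real quantities on $\Omega_e$ (so that the explicit Riccati solution formula applies). A secondary subtlety is legitimising the "straight-line characteristics" step for the complex equation in $v,w$: since $\lambda_1$ is now complex, the level sets of $v$ and of $w$ need not coincide, and one should phrase the argument in terms of the real $2\times2$ hyperbolic (or elliptic) system for $(v,w)$ and check directly that constancy of $r_1$ along the complex characteristic plus the crossing/parallel dichotomy still yields the stated conclusion.
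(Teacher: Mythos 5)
Your plan diverges from the paper's proof in a way that introduces a genuine gap, and the gap is in the central step. You propose to handle the complex pair $r_{1,2}=v\pm\I w$ by a ``complex Burgers equation with straight-line characteristics'' and then run the crossing/parallel dichotomy from Theorem \ref{Omegah}. But in $\Omega_e$ the equation $(r_1)_t+\lambda_1(r_1)_q=0$ with $\lambda_1=\alpha+\I\beta$, $\beta>0$, is equivalent to the \emph{elliptic} first-order system
\begin{align*}
v_t+\alpha v_q-\beta w_q&=0,\\
w_t+\beta v_q+\alpha w_q&=0,
\end{align*}
and there are no real curves in the $(q,t)$-plane along which $r_1$ is transported; the ``characteristics'' are complex, so the crossing/parallel alternative is not even well posed. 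You flag this as a ``secondary subtlety,'' but it is the heart of the matter. The paper's argument is entirely different and much shorter here: $w$ vanishes on $\partial\Omega_e$ (since $r_1=r_2$ is real there), so by the strong maximum principle for the elliptic system $w\equiv 0$ in $\Omega_e$; substituting back, $\beta v_q=0$ with $\beta>0$ forces $v_q=0$ and then $v_t=0$, so $r_1=r_2=v$ is constant. No transport-along-characteristics argument is needed or possible for this pair.

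Your first step (constancy of $r_3$ via the blow-up equation for $i=3$) also does not transfer from the hyperbolic case as you hope. The dichotomy in Theorem \ref{th:blow up} hinges on $z_i=\abs{F_{pp}(\lambda_i)}^{1/2}(r_i)_q$ tending to zero at the boundary, which there follows from $F_{pp}(\lambda_i)\to 0$. In $\Omega_e$ one has $F_{pp}(\lambda_3)=(\lambda_3-\alpha)^2+\beta^2$, which on $\partial\Omega_e$ equals $(3\lambda_3/2)^2$ and does \emph{not} vanish unless all three eigenvalues collide; so $z_3$ need not vanish on $\partial\Omega_e$, a backward characteristic may simply exit $\Omega_e$ with $z_3$ bounded, and the ``explode or vanish identically'' alternative breaks down. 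Likewise the lower bound on $K_3$ fails: $\lambda_3=-2\alpha$ small does not force $F_{pp}(\lambda_3)$ small, since $\beta$ may be large. The paper avoids all of this: once $v$ is constant, $\lambda_{1,2}$ are common roots of $F_p$ and $F-v$, hence double roots of $F-v$, giving $F=\bigl((p-\alpha)^2+\beta^2\bigr)^2/4+v$ with $\alpha=0$ (no cubic term), so $\lambda_3=0$ automatically and $\tilde F=p^2/2+\beta^2/2$ is a conserved quadratic; the equations $(\beta^2)_t=0$ and $(\beta^2/2-u_1)_q=0$, together with $\beta=u_1=0$ on $\partial\Omega_e$, then yield the strip structure and $u_3=u_1^2+\mathrm{const}$ directly. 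You would need to replace both halves of your argument to obtain a correct proof.
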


\begin{proof}
The Riemann invariants $r_{1,2}=v\pm iw$ satisfy the equations
(\ref{riemann}), which are equivalent to the following elliptic
system on their real and imaginary parts.
\begin{align*}
v_t+\alpha v_q-\beta w_q=0\\
w_t+\beta v_q +\alpha w_q=0.
\end{align*}
This is an elliptic system, and therefore, since $w$ vanishes on the
boundary, then by the strong maximum principle the function $w$ must
vanish identically in the whole $\Omega_e$. Substituting back to the
elliptic system we get that $v$ is a constant everywhere $\Omega_e$.
Therefore, $\lambda_{1,2}$ are  roots of the polynomials $F-v$ and
of $F_p$.  Then,
$$F=\frac{1}{4}(p-\lambda_1)^2(p-\lambda_2)^2+v=
\frac{1}{4}((p-\alpha)^2+\beta^2))^2+v.$$ Moreover, $\alpha$
vanishes because $F$ does not contain the cubic terms. Therefore
$$F=(p^2/2+\beta^2/2)^2+v,\quad  \lambda_{1,2}=\pm i\beta, \lambda_3=0$$
This situation can be completely analyzed, because in this case the
quadratic function $\tilde{F}=p^2/2+\beta^2/2$ is the conserved
quantity. But then, by the equation (\ref{eq:conservation}) for $
\tilde{F}$ we have:
\begin{align*}
(\beta^2)_t=0\\
(\beta^2/2-u_1)_q=0.
\end{align*}
Notice, that on the boundary of $\Omega_e$ we have
$\beta=u_1=u_2\equiv0.$ By the first equation $\beta$ vanishes on
any line ${q=const}$ whenever it crosses the boundary, and it is a
constant on any line which lies entirely inside $\Omega_e$. This
yields immediately that $\Omega_e$ is in fact union of strips
parallel to the ${t}$ axes and $\beta=\beta(q)$ and also
$\beta^2/2-u_1\equiv0$ because on the boundary both $\beta$ and
$u_1$ vanish. So we proved
$$F=(p^2/2+u_1(q))^2+const$$ and this completes the proof
\end{proof}
\section{ Degenerate regions $\Omega_0, \Omega_{00}$}
\label{degenerate}
 It follows from the description of $\Omega_e$
and $\Omega_h$ that the degenerate region $\Omega_0$ is a union of
strips parallel to the $t$ axes and moreover every point of the
boundary of each strip belongs, in fact to $\Omega_{00}$. Then, it
follows that $\Omega_0-\Omega_{00}$ is an open set. We claim next
that the degeneration is maximal everywhere, i.e.
$\Omega_0=\Omega_{00}$. Indeed, consider an integral curve of the
$\lambda_3$-family lying inside this open set
$\Omega_0-\Omega_{00}$. Then $r_3$ has constant values along the
curve, and therefore it can not approach the boundary of the
region $\Omega_0-\Omega_{00}$, since otherwise inside one would
get $r_1=r_2=r_3$ inside the region, which contradicts the
assumptions. Thus the whole characteristic stays inside the region
$\Omega_0-\Omega_{00}$. But then,  exactly as above, in the proof
of the Theorem \ref{th:blow up}, we have that the derivative
$(r_3)_q$ must explode in a finite time unless it vanishes. And
therefore, in the whole region $\Omega_0-\Omega_{00}$, $r_3$ is a
constant and therefore $\Omega_0=\Omega_{00}$, and we are done.

\section {Proof of the Lemma  $\ref{lemma1}$}
\label{lemma}
 It is rather simple to prove (\ref{derivative-1}) and
(\ref{derivative-2}) of the lemma. By the definitions of $\lambda_i$
and $r_i$ we have
$$F_p(\lambda_i)=0 , \quad F(\lambda_i)=r_i$$
Differentiate these two formulas with respect to $r_j$. Notice that
the roots $\lambda_i$ depend on $r_j$ and also the coefficients
$u_i$ of the polynomial $F $. We get
\begin{equation}
\label{Fpp}
 F_{pp}(\lambda_i)(\lambda_i)_{r_j}+F_{p r_j}(\lambda_i)=0 , \quad
 F_{r_j}(\lambda_i)=\delta_{ij}
\end{equation}
Then write the following identity
\begin{equation}
\label{identity} F_{r_i}(p)(p-\lambda_i)=(u_1)_{r_i}F_p(p)
\end{equation}
which becomes clear, if one notice that on both sides there are
polynomials of the same degree $n$ with the same leading coefficient
$(u_1)_{r_i}$, and both having $\lambda_1,\dots,\lambda_n$ as the
roots (the right hand side-just by definition, and the left hand
side-by (\ref{Fpp})).

Differentiate this identity with respect to $p$ to obtain
$$(p-\lambda_i)F_{pr_i}+F_{r_i}=(u_1)_{r_i}F_{pp}$$
Substitute in this formula $p=\lambda_i$ one arrives to
(\ref{derivative-3}) of the lemma immediately. Substitute
$p=\lambda_j$ and take into account(\ref{Fpp}), then one proves
(\ref {derivative-2}).

In order to prove (\ref{derivative-1}) let me use the relation
$F_{r_j}(\lambda_i)=\delta_{ij}$ of (\ref{Fpp}) in order to
conclude, that $ F_{r_i}$ is, in fact, identical to the $i$-th
Lagrange interpolation polynomial:
$$
F_{r_i}=\prod_{s\neq
i}\frac{(p-\lambda_s)}{(\lambda_i-\lambda_s)}=l_i(p)
$$
Differentiate this identity with respect to $p$ at the point
$\lambda_i$:
$$
F_{pr_i}(\lambda_i)=(l_i)_p(\lambda_i)=\sum_{s \neq
i}\frac{1}{(\lambda_i-\lambda_s)}
$$
Using (\ref{Fpp}) we obtain
$$(\lambda_i)_{r_i}=-\frac{F_{pr_i}(\lambda_i)}{F_{pp}}
=-\frac{1}{F_{pp}(\lambda_i)}\sum_{s \neq
i}\frac{1}{(\lambda_i-\lambda_s)}
$$
This gives the proof of (\ref{derivative-1})

In order to derive (\ref{derivative-4}) write (\ref{derivative-2})
in the form
$$(\lambda_j)_{r_i}=u_{r_i}/(\lambda_i-\lambda_j)$$
Following Gibbons, Tsarev (\cite {gt}) differentiate this formula
with respect to $r_k$ to get
\begin{equation}
\label{ik}
(\lambda_j)_{r_ir_k}=\frac{u_{r_ir_k}}{(\lambda_i-\lambda_j)}-
\frac{u_{r_i}u_{r_k}}
{(\lambda_i-\lambda_j)(\lambda_k-\lambda_i)(\lambda_k-\lambda_j)}
\end{equation}
Now change the order of the indices $i$ and $k$ in (\ref{ik}) to
have
\begin{equation}
\label{ki}
(\lambda_j)_{r_kr_i}=\frac{u_{r_kr_i}}{(\lambda_k-\lambda_j)}-
\frac{u_{r_k}u_{r_i}}
{(\lambda_k-\lambda_j)(\lambda_i-\lambda_k)(\lambda_i-\lambda_j)}
\end{equation}
Subtract now (\ref{ik}) and (\ref{ki}). One gets
(\ref{derivative-4}) of the lemma.

\section{Concluding remarks and questions}
1. It would be very interesting to generalize the analysis presented
here for 3 by 3 system to the case of higher orders. In general,
there are much more cases of degenerations of eigenvalues for higher
$n$. This makes the analysis much harder. However, the case of
strictly Hyperbolic solutions seems to be tractable.

2. Besides the question on  periodic solutions for the quasi-linear
system,there  is interesting question to understand the global
existence of smooth (not necessarily periodic in time) solutions,
having Hyperbolic initial data. In other words assume, that we are
given for $t=0$ the initial condition, say periodic functions in $q$
$U_0(q)=(u_{10}(q),\dots,u_{n0}(q))$, such that the eigenvalues of
the matrix $A(U_0)$ are real and distinct. Now switch on the
dynamics. The question is if there are solutions existing for all
times, or blow up of the solutions can be established.

3. One of the key ingredients of the proof in this paper was the
observation that the (semi-)Hamiltonian property of the quasi-linear
system allows one to push forward the Lax analysis of the blow up of
solutions. It would be interesting to know, what can be said in this
perspective for other reductions of the Benney chain.

4. It was proved by Tsarev (\cite{tsarev}), that in principle, the
solutions for the system can be obtained by the so called
generalized hodograph method. This method however, is very implicit
and therefore, it is not clear to me how it can be used in order to
answer the question of long time existence.


\end{document}